\numberwithin{equation}{section}
\theoremstyle{plain}
\newtheorem{thm}{Theorem}[section]
\newtheorem{lem}[thm]{Lemma}
\newtheorem{prop}[thm]{Proposition}
\newtheorem{cor}[thm]{Corollary}
\newtheorem{conj}[thm]{Conjecture}
\theoremstyle{definition}
\theoremstyle{remark}
\newtheorem{re}{Remark}[section]
\begin{document}
	
\title[Fixed points of the uncentered Hardy-Littlewood maximal operator. ]
	{Fixed points of the uncentered Hardy-Littlewood maximal operator.}
	
\author{Wu-yi Pan}

\address{Key Laboratory of Computing and Stochastic Mathematics (Ministry of Education),
		School of Mathematics and Statistics, Hunan Normal University, Changsha, Hunan 410081, P.
		R. China}
	
\email{pwyyyds@163.com}

\date{\today}
	
\keywords{Fixed points, Uncentered maximal opeator,  Metric measure space.}

\subjclass[2020]{42B25}
	
\begin{abstract}

We give a survey, known and new results on the beingness of fixed points of the maximal operator in the more general settings of metric measure space. In particular, we prove that the fixed points of the uncentered one must be the constant function if the measure satisfies a mild continuity assumption and its support is connected.

\end{abstract}
	
\maketitle
\section{Introduction}
	Consider a metric space $(X,\rho)$, and let $\mu$ be a positive Borel measure on $X$ such that the support of $\mu$ is nonempty and the measure of each ball is finite. Throughout the short note, we will use the symbol $B$ to denote an closed ball, and $U$ for an open ball.  The \emph{Hardy–Littlewood maximal operator} centered and non-centered on $X$ and, for $f$ 
locally $\mu$-integrable, is given by
\[
M^c_{\mu} f:= \sup_{r} \frac{1}{\mu(B(x,r))}\int_{B(x,r)} fd\mu
;\quad M  _{\mu} f:= \sup_{B: B\ni x} \frac{1}{\mu(B)}\int_B fd\mu.
\]
By abuse of notation, sometimes we will let $M^c f$, $Mf$ stand for them when no confusion can arise.  In order to avoid trivialities, we set $\int_B fd\mu/\mu(B)=0$ when $\mu(B)=0$.
Let us note that maximal functions are defined everywhere in the support of $\mu$ and it does not matter whether we use open or closed balls in the definition of $M$ or $M^c$.  

A problem of fixed points of the centered maximal operators was discussed by Korry in \cite{Ko01}. His result, for $1 \leq p \leq\infty$, in the case of the Euclidean space $\mathbb{R}^d$ with Lebesgue
measure $m$,  showed that existence of a non-constant fixed point
$f\in L^p(\mathbb{R}^d)$ of $M^c_m$ will validate when $n\geq 3$ and $n/(n-2) < p \leq +\infty$ only. It is noteworthy that Fiorenza \cite{Fi88} may be the first person to try tomato—He confirmed the low dimensional case. Then the research  has attracted considerable interest (See \cite{MS06} for the rearrangement invariant space on $\mathbb{R}^d$ and \cite{Zb21} for the general $d$-dimensional Banach space). 

The Lebesgue’s differentiation theorem plays a pivotal role in their proof. It then deduces $|f|(x) \leq M^cf(x)$ almost everywhere for $f\in L^{1}_{\operatorname {loc}}(\mu)$. Yet it is not always true.  
A result of Aldaz \cite{Al18} showed that there exists a Gaussian measure on
an infinite dimensional Hilbert space for which $|f|(x) \leq M^cf(x)$ fails on a
set of positive measure. But that is unlikely to happen for finite-dimensional Banach space, thanks to  Besicovitch covering lemma.

Also let us take into account that if the geometric properties of the support of the measure are good enough, like, e.g., admitting a convex metric, the uncentred maximal operator is strictly greater than the centered one. 
More details can be seen in \cite{Pa22}. This allows us to study the fixed points question for uncentered to  centered if possible. Quite remarkably, the situation of uncentered is different. It is directly obtained from inequality of Lerner \cite{Le10} $
	\Vert M_mf\Vert_{L^p(\mathbb{R})} \geq \left(\frac{p}{p-1}\right)^{\frac{1}{p}}\Vert f\Vert_{L^p(\mathbb{R})}
$ that the fixed points of $M_m$ miss in $\mathbb{R}$. In dimension higher than one, the reverse strong $(p,p)$ inequality for the uncentered maximal operator was discovered by  Ivanisvili et al. \cite{IJN17} in 2017. It entails  the lack of nonconstant fixed
points of $M_m$, which was claimed earlier in \cite[Remark 3.13]{MS06} but they ignored the complete proof. The author and Dong \cite{PD22} showed that uniform bounds again hold for all measure satisfying  $\mu(X)=\infty$ and \begin{equation}\label{con:1.1}
	\mu(\{x\in \operatorname {supp} (\mu): r\mapsto \mu(B(x,r))\; \text{is discontinuous}\})=0.
\end{equation} on a space having the Besicovitch covering property. This result extended the previous one  to a significantly settings.

All  partial supporting evidences  may point toward a conjecture:

\begin{conj}
Let $X$ be a finite-dimensional Banach space. Then there is no non-constant fixed point
$f\in L^{1}_{\operatorname {loc}}(\mu)$ of the operator $M_\mu$ for all measure $\mu$ satisfying   \begin{equation}\label{con:1.2}
\{x\in \operatorname {supp} (\mu): r\mapsto \mu(B(x,r))\; \text{is discontinuous}\}=\varnothing.
\end{equation}
\end{conj}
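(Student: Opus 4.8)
The plan is to exploit a structural feature of the \emph{uncentered} operator that has no analogue for $M^c_\mu$: a single ball $B$ competes in the supremum defining $M_\mu f(y)$ for \emph{every} point $y\in B$, not merely for its center. This forces any fixed point to be locally constant on $\operatorname{supp}(\mu)$, after which connectedness promotes ``locally constant'' to ``constant.'' Concretely, let $f\in L^{1}_{\operatorname{loc}}(\mu)$ satisfy $M_\mu f=f$ $\mu$-a.e., fix a ball $B$ with $\mu(B)>0$, and write $a_B=\frac{1}{\mu(B)}\int_B f\,d\mu$. For $\mu$-a.e.\ $y\in B\cap\operatorname{supp}(\mu)$ the ball $B$ contains $y$, so $f(y)=M_\mu f(y)\ge a_B$; hence $f\ge a_B$ $\mu$-a.e.\ on $B$, while the mean of $f$ over $B$ is exactly $a_B$. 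An integrand that is $\mu$-a.e.\ above its own average yet has that average as its mean must be constant, so $f=a_B$ $\mu$-a.e.\ on $B$. I stress that this step invokes \emph{no} differentiation theorem—the very point that failed in the centered setting after Korry's result—and that \eqref{con:1.2} is needed only to render the operator unambiguous, so that open and closed balls give identical averages and $M_\mu f$ is genuinely defined on all of $\operatorname{supp}(\mu)$.

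The second step is to glue these local constants. The first step says that the $\mu$-a.e.\ value of $f$ is a locally constant function on $\operatorname{supp}(\mu)$: each support point has a ball-neighbourhood on which $f$ equals one constant, and two balls $B_1,B_2$ with $\mu(B_1\cap B_2)>0$ necessarily share that constant. I would then join any two points of $\operatorname{supp}(\mu)$ by a path, cover it by a finite chain of such balls with consecutive positive-measure overlaps, and propagate the common value along the chain; connectedness of $\operatorname{supp}(\mu)$—the hypothesis announced in the abstract—guarantees that such a chain exists, forcing $f$ to agree $\mu$-a.e.\ with a single global constant and contradicting non-constancy.

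The hard part, and the real content behind the continuity hypothesis, is upgrading topological connectedness to the \emph{measure-theoretic} chaining used above: one must know that consecutive balls overlap in a set of positive $\mu$-measure that meets $\operatorname{supp}(\mu)$, not merely in a $\mu$-null set. This is exactly what \eqref{con:1.2} supplies, since forbidding mass on spheres makes $\mu(B(c,r))$ jointly continuous in $(c,r)$ and hence makes the averages $a_B$ vary continuously as the ball slides, so that overlapping neighbourhoods of nearby support points carry positive mass. For the Conjecture in full—where $\operatorname{supp}(\mu)$ is no longer assumed connected—I would instead link distant pieces by a \emph{single large} ball $B$ meeting several components at once: the first step forces $f$ to be $\mu$-a.e.\ constant on that one $B$ too, so every component it captures shares the same value, and connectedness becomes dispensable whenever finite-radius balls can still capture separated pieces with positive relative weight. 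The genuine obstruction is therefore purely ``at infinity'': when $\mu(X)=\infty$ and portions of $\operatorname{supp}(\mu)$ recede so that no finite ball captures two of them with comparable mass, this long-range gluing can break down. This is precisely the configuration that forced the $\mu(X)=\infty$ and Besicovitch assumptions in \cite{PD22}, and eliminating it is the step I expect to be the crux of settling the Conjecture.
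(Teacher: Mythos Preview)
Your first step---that $f\ge a_B$ $\mu$-a.e.\ on every closed ball $B$, hence $f=a_B$ $\mu$-a.e.\ on $B$---is correct, and it already finishes the proof of the Conjecture; the ``obstruction at infinity'' you anticipate does not exist. Any two points $x,y\in\operatorname{supp}(\mu)$ lie in the single closed ball $B=B\bigl(x,\rho(x,y)+1\bigr)$, which satisfies $0<\mu(B)<\infty$ by the standing assumptions, and your first step forces $f$ to equal the constant $a_B$ $\mu$-a.e.\ on all of $B$. Thus the essential value of $f$ is the same near every support point, and $f$ is $\mu$-a.e.\ constant. No path-chaining, no connectedness, and no special handling of $\mu(X)=\infty$ is needed: your own ``single large ball'' remark \emph{is} the entire gluing argument, not an auxiliary trick for the disconnected case. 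Notice also that this proof never invokes \eqref{con:1.2}; the inequality $M_\mu f(y)\ge a_B$ for $y\in B$ uses only that $B$ is a closed ball containing $y$, which is how $M_\mu$ is defined.

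This is a genuinely different route from the paper's. The paper argues via level sets: it shows each $L_\lambda=\{Mf>\lambda\}$ is clopen in $\operatorname{supp}(\mu)$ through a nearest-point argument in which \eqref{con:1.2} is used essentially (to pass from the open ball $U(y,d)\subset L_\lambda$, where $f>\lambda$, to the closed ball $B(y,d)$ that reaches the nearest point of $L_\lambda^c$), deduces that $f=Mf$ is continuous, and then invokes the Heine--Borel property (Theorem~\ref{thm:1.2}) or the finite-component hypothesis (Theorem~\ref{thm:1.3}) to upgrade ``constant on each connected component'' to ``globally constant''. Your averaging argument bypasses both the continuity hypothesis \eqref{con:1.2} and the compactness/finite-component step, so it in fact resolves the question the paper leaves open at the end of the introduction about whether \eqref{con:1.2} can be dropped.
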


Let us make a remark that the inequality we obtained in \cite{PD22}  cannot directly deduce the conjecture because the focus of our attention is from $f\in L^p(\mu)$ to $f\in L^{1}_{\operatorname {loc}}(\mu)$ and from infinite measure to no. 

Our purpose  is to show that the conjecture is true. We first state the following.

\begin{thm}\label{thm:1.2}
	Let $(X,\rho)$ be a metric space having the Heine–Borel property  and let $\mu$ be the measure on $X$ satisfying the condition of \eqref{con:1.2},  Then there is no non-constant fixed point
	$f\in L^{1}_{\operatorname {loc}}(\mu)$ of the operator $M$.
	
\end{thm}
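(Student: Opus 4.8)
The plan is to reduce everything to a single \emph{push-up lemma}: every fixed point is $\mu$-a.e.\ equal to a constant on each ball of positive measure. The decisive feature of the \emph{uncentered} operator is that one ball $B$ may be used to bound $Mf$ at \emph{every} point it contains, not merely at a center. Concretely, fix a ball $B$ with $0<\mu(B)<\infty$ and set $a=\frac{1}{\mu(B)}\int_B f\,d\mu$. For every $y\in B$ the ball $B$ is admissible in the supremum defining $Mf(y)$, so $Mf(y)\ge a$ holds \emph{pointwise} for all $y\in B$. Since $Mf=f$ holds $\mu$-a.e., this gives $f\ge a$ $\mu$-a.e.\ on $B$; as $\int_B (f-a)\,d\mu=0$ with a nonnegative integrand, we conclude $f=a$ $\mu$-a.e.\ on $B$. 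This is the entire local statement, and it is exactly where the centered case breaks down: for $M^c$ only the center of $B$ sees the average $a$, which is why nonconstant fixed points can survive in high dimensions.

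Next I would record the preliminaries that make the averages above legitimate, and this is the only place the hypotheses do real work. By the Heine--Borel property every closed ball is compact; since each ball has finite measure and $f\in L^{1}_{\operatorname{loc}}(\mu)$, one obtains $f\in L^1(B)$ on every closed ball $B$, so that $a$ is a finite real number and the nonnegativity argument is valid. I expect this integrability step to be the genuine (if modest) obstacle: it is precisely what fails in the infinite-dimensional Gaussian setting of Aldaz \cite{Al18}, where closed balls are not compact, so one must invoke compactness rather than mere finiteness of $\mu(B)$. The continuity hypothesis \eqref{con:1.2} enters only to make the operator unambiguous: it guarantees that open and closed balls produce the same averages and that $M$, hence the identity $Mf=f$, is defined consistently on $\operatorname{supp}(\mu)$; I will use it to pass freely between open and closed balls.

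Finally I would globalize without invoking connectedness. The push-up lemma shows $f$ equals a constant $\mu$-a.e.\ on every positive-measure ball; applied to the shrinking balls $B(x,\varepsilon)$ it assigns to each $x\in\operatorname{supp}(\mu)$ a well-defined local value $c(x)$, nested overlapping balls forcing $c$ to be locally constant. To see that $c$ is a single constant, take any two points $x_0,y\in\operatorname{supp}(\mu)$ and apply the lemma to the one ball $B\big(x_0,\rho(x_0,y)+1\big)$, which has positive finite measure and contains neighborhoods of both $x_0$ and $y$; the a.e.\ constant value on this ball must coincide with both $c(x_0)$ and $c(y)$, whence $c(x_0)=c(y)$. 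As $x_0,y$ were arbitrary, $f$ is $\mu$-a.e.\ equal to one global constant, contradicting non-constancy. If instead one assumes, as in the abstract, that $\operatorname{supp}(\mu)$ is connected, this last step is immediate from local constancy and the connecting-ball argument may be omitted.
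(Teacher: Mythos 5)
Your proof is correct, and it takes a genuinely different route from the paper's. The paper argues topologically: for the level sets $L_\lambda=\{x\in\operatorname{supp}(\mu):Mf(x)>\lambda\}$, openness comes from lower semicontinuity of $Mf$, closedness from a nearest-point argument, and it is exactly in the closedness step that \eqref{con:1.2} is used (continuity of $r\mapsto\mu(B(y,r))$ makes the sphere at the critical radius $\mu$-null, so the closed-ball average still exceeds $\lambda$ at the nearest point of the complement); this makes $Mf=f$ continuous and constant on each connected component of the support, and the Heine--Borel property is then invoked so that $Mf$ attains a minimum on the compact set $B\cap\operatorname{supp}(\mu)$, ruling out different values at different points. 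Your push-up lemma short-circuits all of this with a single integral identity that exploits precisely the uncentered feature ($Mf\ge\langle f\rangle_B$ at \emph{every} point of $B$, not just the center), and your globalization needs only one large ball --- no connectedness, no semicontinuity, no compactness-for-minima. What your approach buys: it works under the weaker a.e.\ reading of $Mf=f$, and --- contrary to what you write --- it never uses \eqref{con:1.2} at all: the operator is already unambiguous without that hypothesis (closed-ball and open-ball averages approximate one another monotonically in the radius for locally integrable $f$), and your argument in any case touches only closed balls. So you have in fact proved the stronger statement that the Heine--Borel property plus the standing finiteness of ball measures suffices, which bears directly on the paper's closing question of whether the restriction \eqref{con:1.2} is necessary. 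Two points to tighten: since $Mf$ is defined only on $\operatorname{supp}(\mu)$, your step ``$f\ge a$ $\mu$-a.e.\ on $B$'' tacitly uses $\mu(B\setminus\operatorname{supp}(\mu))=0$, which does hold here because Heine--Borel makes $X$ $\sigma$-compact, hence separable; and your aside attributing the obstruction in Aldaz's example \cite{Al18} to failure of integrability on balls is inaccurate (that example concerns failure of the pointwise bound $|f|\le M^cf$, i.e.\ of the differentiation theorem), though nothing in your argument depends on that remark.
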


Recall that a metric space $(X,\rho)$ is said to have the Heine–Borel property if each closed bounded set in $X$ is compact.  The most  definitive example of such space is as follows: Let $X$ be a locally compact, complete metric space such that for all $x\in X$ and $r>0$, then $\overline{U(x,r)}=B(x,r)$. See e.g. \cite{BBI01}.  This includes finite-dimensional Banach spaces and Riemannian manifolds. Also note that some infinite-dimensional Fréchet spaces have the Heine–Borel property, for instance, the space ${\displaystyle C^{\infty }(\Omega)}$ of smooth functions on an open set  $\Omega\subset\mathbb {R} ^{d}$. 

By an almost identical argument, we also prove
\begin{thm}\label{thm:1.3}
	 Assume that the connected components of  the support of $\mu$ are finite and the function $r\mapsto \mu(B(x,r))$ is continuous on the interval $(0,+\infty)$ for all fixed $x\in \operatorname {supp}(\mu)$. Then there is no non-constant fixed point
	 $f\in L^{1}_{\operatorname {loc}}(\mu)$ of the operator $M$.
\end{thm}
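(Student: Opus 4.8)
\section*{Proof proposal}

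My plan is to prove directly that every fixed point of $M$ is $\mu$-almost everywhere equal to a constant, which is exactly the assertion that no nonconstant fixed point exists. Throughout write $A_Bf=\frac{1}{\mu(B)}\int_B f\,d\mu$ for the average of $f$ over a ball $B$, and let $f\in L^1_{\operatorname{loc}}(\mu)$ satisfy $Mf=f$ $\mu$-a.e. The engine of the argument is a single \emph{super-averaging} inequality. For every ball $B$ and every point $z\in B\cap\operatorname{supp}(\mu)$ the ball $B$ is itself an admissible competitor in the supremum defining $Mf(z)$, so $Mf(z)=\sup_{B'\ni z}A_{B'}f\ge A_Bf$. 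Intersecting this with the fixed-point identity $f=Mf$, which holds $\mu$-a.e., we obtain for each fixed ball $B$ that $f(z)\ge A_Bf$ for $\mu$-a.e. $z\in B$.

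The rest is a squeeze followed by an exhaustion. Fix a ball $B$ for which $A_Bf$ is finite. Since $f\ge A_Bf$ holds $\mu$-a.e. on $B$ while $A_Bf$ is precisely the $\mu$-average of $f$ over $B$, a function dominating its own average almost everywhere must in fact agree with it almost everywhere (otherwise the average would be strictly larger); hence $f=A_Bf$ $\mu$-a.e. on $B$, i.e. $f$ is $\mu$-a.e. constant on every ball. To globalize, fix $z_0\in\operatorname{supp}(\mu)$ and consider the nested balls $B(z_0,R)$, $R>0$, which cover $X$ because every point lies at finite distance from $z_0$. For $R<R'$ the overlap contains $B(z_0,R)$, which has positive measure, so the two constants $A_{B(z_0,R)}f$ and $A_{B(z_0,R')}f$ must coincide; letting $R\to\infty$ we conclude that $f$ equals a single constant $c$ $\mu$-a.e. on $X$. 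Finally $Mf\equiv c$ everywhere on the support, so $f=Mf\equiv c$, and $f$ is constant.

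It remains to locate where the hypotheses are spent, which is also where the only real difficulty lies. The argument used that open and closed balls may be interchanged in the definition of $M$; this is legitimate because the continuity of $r\mapsto\mu(B(x,r))$ forces every sphere to be $\mu$-null, exactly as recorded in the introduction. More importantly, the squeeze requires $A_Bf$ to be finite, i.e. $f\in L^1(B)$, for every ball $B$. In Theorem \ref{thm:1.2} this is automatic: the Heine–Borel property makes each closed ball compact, so local integrability upgrades to integrability on balls. Under the hypotheses of Theorem \ref{thm:1.3} there is no such compactness, and the main obstacle is to recover finiteness of $A_Bf$ on every ball from the weaker data at hand. I would argue componentwise: on each of the finitely many connected components the continuity of $r\mapsto\mu(B(x,r))$ controls the growth of $\mu(B(x,r))$ and kills boundary mass, which should yield $f\in L^1(B)$ for balls meeting that component, and finiteness of the number of components then lets these local conclusions be assembled into integrability on every ball. (The exhaustion step itself needs no connectedness, since the balls $B(z_0,R)$ already cover $X$.) Establishing this integrability on possibly non-relatively-compact balls is the delicate point and the one genuinely new ingredient beyond the proof of Theorem \ref{thm:1.2}, and it is the step I expect to demand the most care.
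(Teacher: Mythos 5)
Your proposal is correct in its essentials, but it is a genuinely different argument from the paper's, and in fact a stronger one. The paper's proof is topological: it shows that each level set $\{x\in\operatorname{supp}(\mu):Mf(x)>\lambda\}$ is open in $\operatorname{supp}(\mu)$ (lower semicontinuity of $Mf$) and, via a nearest-point argument that uses the continuity of $r\mapsto\mu(B(x,r))$, also closed; hence $f=Mf$ is constant on each connected component, and the finiteness of the number of components is then used to find a point where $Mf$ attains its minimum and to derive a contradiction from a ball charging a component where $f$ is larger. Your argument replaces all of this by the averaging squeeze ($f\geq\langle f\rangle_B$ a.e.\ on every ball $B$ of positive measure forces $f\equiv\langle f\rangle_B$ a.e.\ on $B$) followed by gluing along the nested balls $B(z_0,n)$. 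Notably, you never actually use either hypothesis of Theorem \ref{thm:1.3}: neither the continuity of $r\mapsto\mu(B(x,r))$ nor the finiteness of the number of components enters your argument (and the interchangeability of open and closed balls, which you invoke, is also not needed: whichever family of balls defines $M$, use that same family in your super-averaging step). Written carefully, your proof therefore gives the stronger, hypothesis-free statement that the paper only conjectures in its introduction (cf.\ the discussion around \eqref{con:1.2}). That is a gain, not a defect, but you should say so explicitly, and you should make visible the one standing assumption both you and the paper use silently: that $\mu(B\setminus\operatorname{supp}(\mu))=0$ for every ball $B$, so that ``a.e.\ $z\in B$'' and ``$z\in B\cap\operatorname{supp}(\mu)$'' can be interchanged. (This is automatic in separable spaces; the paper's proof uses it in the step $\langle f\rangle_{U(y,d)}>\lambda$.)

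The one genuine weak spot is your closing paragraph on finiteness of $\langle f\rangle_B$. The proposed mechanism --- that continuity of $r\mapsto\mu(B(x,r))$ ``controls the growth of $\mu(B(x,r))$ and kills boundary mass,'' assembled componentwise --- is not a real argument: continuity of ball volumes says nothing about growth of the measure or about integrability of $f$ over large balls. Fortunately, no such argument is needed. Either one reads $L^{1}_{\operatorname{loc}}(\mu)$ as integrability on balls, in which case there is nothing to prove; or one uses self-improvement: if $\int_B|f|\,d\mu=\infty$ for some ball with $\mu(B)>0$, then $Mf\equiv+\infty$ on $B\cap\operatorname{supp}(\mu)$, so the fixed-point identity forces $f=+\infty$ on a set of positive measure, contradicting the a.e.\ finiteness of a locally integrable function. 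With that one-line replacement your proof is complete, and it is both more elementary and more general than the paper's.
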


This useful
criteria tells us, even in some ``malformed"  metric measure spaces which do not meet the differentiation  theorem, there are still some positive results for the question. For instance, the example of Preiss \cite{Pr81} mentioned.

\begin{cor}\label{cor:1.4}
There is an infinite dimensional separable Hilbert space with a Gaussian measure $\gamma$ such that $M^cf(x) < |f|(x)$ for some $f\in L^{1}_{\operatorname {loc}}(\gamma)$ on a set of positive measure but also is no non-constant fixed point
$f\in L^{1}_{\operatorname {loc}}(\gamma)$ of the operator $M$.
\end{cor}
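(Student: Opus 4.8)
The plan is to take for $\gamma$ exactly the (nondegenerate) Gaussian measure produced by Aldaz \cite{Al18} on a separable Hilbert space $H$, equipped with the norm metric, for which $|f|(x)\le M^cf(x)$ is known to fail on a set of positive measure. This choice settles the first assertion at once: for a suitable $f\in L^{1}_{\operatorname{loc}}(\gamma)$ the set $\{x: M^cf(x)<|f|(x)\}$ has positive $\gamma$-measure. It then remains only to check that this same $\gamma$ satisfies the two hypotheses of Theorem \ref{thm:1.3}, after which the absence of nonconstant fixed points of $M$ is immediate. Observe that $H$ fails the Heine--Borel property (its closed unit ball is noncompact), so Theorem \ref{thm:1.2} is unavailable and Theorem \ref{thm:1.3} is the appropriate tool; this is precisely the point of the corollary.

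First I would verify the support condition. Since $\gamma$ is a nondegenerate Gaussian, its topological support is the closure of a translate of its Cameron--Martin space, which is dense in $H$; hence $\operatorname{supp}(\gamma)=H$. As a normed vector space $H$ is path-connected, so its support has exactly one connected component, a finite number, as required.

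The main work is the continuity of $r\mapsto \gamma(B(x,r))$ on $(0,+\infty)$ for each fixed $x\in H$. Right-continuity is automatic: as $r\downarrow r_0$ the closed balls $B(x,r)$ decrease to $B(x,r_0)$ and $\gamma$ is finite, so continuity from above applies. For left-continuity, as $r\uparrow r_0$ the closed balls increase to the open ball $U(x,r_0)$, so it suffices to show that each sphere $\{y:\|y-x\|=r_0\}$ is $\gamma$-null. Diagonalizing the covariance operator, one writes $\|y-x\|^2$ as an almost surely convergent series $\sum_k \lambda_k(\xi_k-c_k)^2$ in independent standard Gaussians $\xi_k$ with all $\lambda_k>0$ and $\sum_k\lambda_k<\infty$. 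A single summand is absolutely continuous and independent of the remainder, so convolution shows that the law of $\|y-x\|^2$, and hence of $\|y-x\|$, is absolutely continuous with respect to Lebesgue measure on $[0,\infty)$. Therefore $\gamma(\{y:\|y-x\|=r_0\})=0$ for every $r_0>0$, which gives left-continuity. I expect this null-sphere estimate to be the only genuinely technical step.

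Combining the two verifications, $\gamma$ meets the hypotheses of Theorem \ref{thm:1.3}, which yields that no nonconstant $f\in L^{1}_{\operatorname{loc}}(\gamma)$ is fixed by $M$; together with the first assertion this proves the corollary. The identical argument applies to the example of Preiss \cite{Pr81}, since it too is a nondegenerate Gaussian.
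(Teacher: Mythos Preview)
Your proposal is correct and follows the same strategy as the paper: exhibit a Gaussian measure for which Lebesgue differentiation fails, verify the hypotheses of Theorem~\ref{thm:1.3} (connected support and continuity of $r\mapsto\gamma(B(x,r))$), and conclude. The paper works with the Preiss example rather than Aldaz's and only checks the support condition explicitly (it is silent on the sphere--null step), so your argument is in fact more complete on that point; otherwise the two proofs coincide.
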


In our investigation here, we only consider the measure having a certain kind of continuous condition i.e.\eqref{con:1.2} because our proofs is highly depending on this condition then the interest question is  whether the restriction is not necessary. The readers can obviously find that the result is affirmative for finite discrete measures, no matter what kind of the general metric it is. The question is still open but is seen to be true.

\section{Proofs}\label{S2}

The proofs are quite unexpected, and require no priori knowledge at all. For abbreviation, we write $\langle f\rangle_A$ instead of the integral average of $f$ over a measurable set $A$: $\frac{1}{\mu(A)}\int_A|f|d\mu$.

\begin{proof}[Proofs of Theorem \ref{thm:1.2} and Theorem  \ref{thm:1.3}.]
	Fixed $\lambda>0$, consider the level set $L_\lambda:=\{x\in \operatorname {supp} (\mu): Mf(x)>\lambda\}$. The lower semi-continuous guarantees openness of $L_\lambda$.
	To prove the theorem, we first show $L_\lambda$ is also closed if nonempty. Take a point $y$ in $L^c_\lambda=\{x\in \operatorname {supp} (\mu): Mf(x)\leq\lambda\}$ (Clearly, we can suppose it is nonempty). Since the distance from $y$ to a closed set is attained, then there exists a point $x_0\in L^c_\lambda$ such that $d(y,L^c_\lambda)=d(y,x_0)$. We will denote it briefly by $d$. Thus $U(y,d)\subseteq L_\lambda=\{x\in \operatorname {supp} (\mu): f(x)>\lambda\}$ and hence $Mf(x_0)\geq\langle f\rangle_{B(y,d)}\geq \langle f\rangle_{U(y,d)}>\lambda$. This contradicts the assumption of $x_0\in L^c_\lambda$.  This means that $f$ takes a constant value on a connected component of $\operatorname {supp} (\mu)$.
 If	the connected components of $\operatorname {supp} (\mu)$ are finite  then $Mf(x)$ attains its low bound at a point $x_0\in \operatorname {supp} (\mu)$. Hence $f$ is a constant function. Otherwise we can take a ball such that $B\ni x_0$ and $
 	\mu( B\cap L_{f(x_0)})>0
$. This implies $Mf(x_0)\geq \langle f\rangle_B>f(x_0)$ which is impossible.

Now we consider the replaceable hypothesis of that  $d$ has a Heine–Borel metric. This means that each closed ball of $X$ is compact. Suppose, if possible, that $f$ is not a constant function. Thus there exists two points $x,y\in \operatorname {supp}(\mu)$ so that $f(x)>f(y)$. By  preceding part of the proof, $f(x)$ is continuous, so $Mf(x)$  also attains its low bound at a any $B\cap \operatorname {supp}(\mu)$ for which $B$ containing $x$ and $y$. It entails that $Mf(x)$ is constant on $B\cap \operatorname {supp}(\mu)$ which contradicts $f(x)>f(y).$
\end{proof}

\begin{re}
	Our  result also yields that for given $1 \leq p < \infty$ and $f\in L^p(\mu)$, 
	one can find a constant $\varepsilon>0$ such that
$
	\Vert M_\mu f\Vert_{L^p(\mu)} \geq (1+\varepsilon)\Vert f\Vert_{L^p(\mu)}
$ if we add a infinite constraint to the measure. As we have discussed earlier, Ivanisvili et al.  \cite{IJN17} demonstrated that the constant must achieve independence of $f$ when $X$ is the Euclidean space and $\mu$ is the  $d$-dimensional Lebesgue measure. Subsequently, The author and Dong take their insights to an extreme and solved the independence of the measure having \eqref{con:1.1} in \cite{PD22}. However, if one wants to determine the performance to  centered one, see \cite{IZ19,Zb21} for more details. 
\end{re}

\begin{proof}[Proof of Corollary \ref{cor:1.4}]
 Recall that the measure $\gamma$ presented in \cite{Pr81} has the following form: the restriction of the standard
 Gaussian measure from $\mathbb{R}^\mathbb{N}$ to its linear subspace
 $H:=\{x\in\mathbb{R}^\mathbb{N}:\sum_{i=1}^\infty \lambda_ix_i^2<\infty\}$ equipped with the norm $\Vert x\Vert =\left(\sum_{i=1}^\infty \lambda_ix_i^2\right)^{1/2}$ where $ \lambda_1 \geq \lambda_2 \geq \cdots > 0$ satisfy $\sum_{i=1}^{\infty}\lambda_i<\infty$ and the standard Gaussian measure on $\mathbb{R}^\mathbb{N}$ is defined as the countable
 product of the one-dimensional standard Gaussian measures. Clearly, by this special metric
 measure structure, every ball contains a small cylinder set of positive measure. Thus $\operatorname {supp} (\mu)$ is  entire space, and hence the corollary follows by applying Theorem \ref{thm:1.3}.
\end{proof}

Finally, we conclude this note with a applications of our main results, which is an attempt to find out whether two maximal operators coincide at a function.

\begin{prop}\label{pr:2.1} Let $(X,\rho)$ be a metric space having the Heine–Borel property and let $\mu$ be a measure on $X$ satisfying the condition of \eqref{con:1.2}.
Assume the support of $\mu$ with no isolated points. Then there is no function $f\in L^{1}_{\operatorname {loc}}(\mu)$ such that $M^cf=Mf$ and every local minimum point of $Mf$ is  strict local minimum.
\end{prop}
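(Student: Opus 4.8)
The plan is to run the level-set argument of Theorem \ref{thm:1.2} on the function $g:=Mf=M^cf$ itself. First I would reduce to the case $f\ge 0$ by replacing $f$ with $|f|$, which alters neither $Mf$ nor $M^cf$, and record that $g$ is lower semicontinuous and nonnegative. The hypothesis $M^cf=Mf$ then has two complementary faces that I will use repeatedly: on the uncentered side, every ball $B\ni x$ satisfies $\langle f\rangle_B\le g(x)$; on the centered side, $g(x)=\sup_{r>0}\langle f\rangle_{B(x,r)}$. The goal is to show that each superlevel set $L_\lambda:=\{x\in\operatorname{supp}(\mu):g(x)>\lambda\}$ is both open and closed in $\operatorname{supp}(\mu)$. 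Granting this, $g$ is constant on every connected component of $\operatorname{supp}(\mu)$; since $\operatorname{supp}(\mu)$ has no isolated points (and, for the spaces at issue, its components are open), any point then has a neighborhood on which $g$ is constant, hence is a \emph{non-strict} local minimum of $Mf$, contradicting the hypothesis. So the whole proposition reduces to the clopenness of $L_\lambda$.

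Openness of $L_\lambda$ is the lower semicontinuity of $g$, exactly as in Theorem \ref{thm:1.2}. For closedness I would imitate the nearest-point step: assuming $L_\lambda\ne\varnothing$ and $L_\lambda^c\ne\varnothing$, pick $y\in L_\lambda$ and a nearest point $x_0\in L_\lambda^c$, writing $d:=\rho(y,x_0)>0$, so that $U(y,d)\subseteq L_\lambda$. The new ingredient supplied by the coincidence is a \emph{scale restriction}: for every radius $r\ge d$ the ball $B(y,r)$ contains $x_0$, whence $\langle f\rangle_{B(y,r)}\le g(x_0)\le\lambda$ by the uncentered face; combined with the centered face $g(y)=\sup_r\langle f\rangle_{B(y,r)}>\lambda$, the excess of $g(y)$ over $\lambda$ must be carried entirely by scales $r<d$. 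In particular, if any good radius were $\ge d$ we would immediately obtain $g(x_0)\ge\langle f\rangle_{B(y,r)}>\lambda$, a contradiction; it is precisely the equality $M^cf=Mf$ that forbids this easy exit and confines the excess to small balls that avoid $x_0$.

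It remains to convert this confined excess into a contradiction, and this is the step I expect to be the main obstacle. Using that the ambient space is geodesic (as in the motivating examples) together with the Heine--Borel property, I would take a small ball $B(y,r^\ast)$, $r^\ast<d$, with $\langle f\rangle_{B(y,r^\ast)}>\lambda$, choose the point $x_1\in B(y,r^\ast)$ closest to $x_0$, note that $g(x_1)\ge\langle f\rangle_{B(y,r^\ast)}>\lambda$ and $\rho(x_1,x_0)<d$, and iterate to produce $x_k\to x_0$ carrying shrinking excess balls of average $>\lambda$ that hug $x_0$, while every centered ball at $x_0$ has average $\le\lambda$. The difficulty is that a priori the excess could \emph{concentrate} on ever smaller off-center balls, which lower semicontinuity and \eqref{con:1.2} alone cannot exclude in the limit. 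Here the full force of the coincidence must be used \emph{away} from $x_0$: a ball of high average concentrated on a small set forces, at an adjacent point, the uncentered average to strictly exceed every centered average there (the elementary comparison $M^cf<Mf$ near a concentrated mass), contradicting $M^cf=Mf$ at that point. Turning this qualitative no-concentration principle into the quantitative statement that the accumulated excess is visible to a \emph{centered} ball at $x_0$, thereby contradicting $g(x_0)\le\lambda$, is the crux; the assumption \eqref{con:1.2} enters to pass to limits of the averages, and the absence of isolated points is used both to run this limiting argument and, at the end, to upgrade component-wise constancy of $g$ to a genuine non-strict local minimum.
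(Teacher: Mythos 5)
Your proposal stalls exactly where you say it does, and the obstacle is not a technicality: the nearest-point argument of Theorem \ref{thm:1.2} genuinely does not transfer from $f$ to $g:=Mf$. In that theorem the superlevel set $L_\lambda$ is a superlevel set of $f$ itself (because $Mf=f$), so $U(y,d)\subseteq L_\lambda$ immediately yields $\langle f\rangle_{U(y,d)}>\lambda$ and hence $Mf(x_0)>\lambda$. When $L_\lambda$ is only a superlevel set of $Mf$, knowing $Mf>\lambda$ on $U(y,d)$ gives no lower bound on the average of $f$ over that ball, and the ``no-concentration principle'' you invoke to repair this is precisely the unproved crux; it does not follow from $M^cf=Mf$, lower semicontinuity, or \eqref{con:1.2}, and along the way you also smuggle in a geodesic hypothesis that is not in the statement. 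There is a second, independent defect in your endgame: even granting clopenness of every $L_\lambda$, you only get constancy of $g$ on connected components, and components of $\operatorname{supp}(\mu)$ need not be open. The hypotheses of the proposition are satisfied, for instance, by the Cantor set with the Cantor measure (compact, no isolated points, and $r\mapsto\mu(B(x,r))$ is continuous since the measure is non-atomic and spheres are finite sets); there the components are singletons, component-wise constancy is vacuous, and no non-strict local minimum is produced.

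The paper's proof avoids level-set geometry for $Mf$ entirely and is worth comparing with your second paragraph, because your ``scale restriction'' observation is exactly Lemma \ref{thm:2.3} of the paper, deployed differently. The paper argues: the measure is non-atomic (no isolated points plus \eqref{con:1.2}); strict local minimum points of any function form a countable set, so by hypothesis \emph{all} local minimum points of $Mf$ form a countable, hence $\mu$-null, set; therefore at $\mu$-a.e.\ $x$ there exist $y$ arbitrarily close to $x$ with $Mf(y)<Mf(x)=M^cf(x)$, and Lemma \ref{thm:2.3} confines the supremum defining $M^cf(x)$ to radii at most $d(x,y)$, giving $Mf(x)=\lim_{r\to 0}\langle f\rangle_{B(x,r)}$ a.e.; the Lebesgue differentiation theorem then gives $Mf=f$ a.e., and after modifying $f$ on a null set (which changes neither $Mf$ nor $M^cf$) one has an exact fixed point, so Theorem \ref{thm:1.2} forces $f$, hence $Mf$, to be constant, contradicting strictness of the local minima. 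If you want to salvage your write-up, the fastest route is to redirect your scale-restriction observation from the point $y\in L_\lambda$ to the $\mu$-generic point, as the paper does, rather than trying to prove closedness of level sets of $Mf$.
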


We also left a conjecture for the reader to verify.
\begin{conj}
	Let $X$ be a finite-dimensional Banach space; let $m$ be the Lebesgue measure on $X$ and $f\in L^{1}_{\operatorname {loc}}$. Then $M^cf=Mf$ if and only if $f\in L^\infty$ and $\lim\limits_{r\to \infty}\langle f\rangle_{B(x,r)}=\Vert f\Vert_\infty$.
\end{conj}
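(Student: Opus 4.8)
The plan is to treat the two implications separately, and to dispose of the routine one first. For the reverse implication, assume (taking $f\ge 0$, or reading $f$ as $|f|$ in accordance with the convention $\langle f\rangle_A=\frac1{\mu(A)}\int_A|f|\,d\mu$) that $f\in L^\infty$ and $\lim_{r\to\infty}\langle f\rangle_{B(x,r)}=\|f\|_\infty$. Since $\langle f\rangle_{B(x,r)}\le\|f\|_\infty$ for every $r$, this limit forces $M^cf(x)=\sup_{r>0}\langle f\rangle_{B(x,r)}=\|f\|_\infty$ at every $x$, i.e. $M^cf\equiv\|f\|_\infty$. Since $Mf\ge M^cf\equiv\|f\|_\infty$ while every average $\langle f\rangle_B\le\|f\|_\infty$ forces $Mf\le\|f\|_\infty$, we obtain $Mf\equiv\|f\|_\infty=M^cf$. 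Thus both operators collapse to the same constant, and the reverse implication holds.

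For the forward implication the natural strategy is to show that $M^cf=Mf$ already forces both functions to be the constant $A:=\|f\|_\infty$, and then to read off the limit. Constancy can be approached as in the proof of Theorem \ref{thm:1.2}: the equality is equivalent to the pointwise bound $\langle f\rangle_B\le M^cf(x)$ for every ball $B$ and every $x\in B$, and, combined with lower semicontinuity and the Heine–Borel property of $X$, this should force the superlevel sets of $M^cf$ to be clopen, whence $M^cf\equiv A$ on the connected space $X$. One must also exclude $A=+\infty$; here a caveat is already visible, since $f(y)=\|y\|$ has $M^cf\equiv Mf\equiv+\infty$, so the conclusion $f\in L^\infty$ can hold only after one restricts to $f$ for which $Mf$ is a genuine finite-valued function. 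Granting $M^cf\equiv A<\infty$, it would then remain to prove $\lim_{r\to\infty}\langle f\rangle_{B(x,r)}=A$.

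The hard part is exactly this last step, and I expect it to be fatal as stated: the hypothesis $M^cf\equiv A$ controls only $\sup_{r>0}\langle f\rangle_{B(x,r)}$, never the behaviour of $\langle f\rangle_{B(x,r)}$ as $r\to\infty$, so there is no mechanism to rule out oscillation of the large-ball averages. Indeed, already in $X=\mathbb R$ one can defeat the limit: put $R_j=2^{2^j}$ (so $R_{j+1}=R_j^2$), fix $\eta\in(0,1)$, and let $f=\mathbf 1_E$ with $E=\mathbb R\setminus\bigcup_{j\ge1}[R_j,R_j+\eta R_j]$. Every point has balls, reaching into the super-geometrically long $E$-gaps, whose averages tend to $1$; even a point inside the $j$-th hole uses radii $r$ with $R_j\ll r\ll R_{j+1}$, for which the single swallowed block of length $\eta R_j$ is a vanishing fraction of $2r$. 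Hence $M^cf\equiv1$, and therefore $Mf\equiv1=M^cf$. Yet at the radii $r=R_j(1+\eta)$ the ball $B(0,r)$ has just absorbed the block of length $\eta R_j$ while its radius is only $\approx R_j$, so $\langle f\rangle_{B(0,R_j(1+\eta))}\le1-\frac{\eta}{2(1+\eta)}$, giving $\liminf_{r\to\infty}\langle f\rangle_{B(0,r)}<1=\|f\|_\infty$ (the limit does not even exist). This is a genuine counterexample to the ``only if'' direction, so the equivalence cannot hold as phrased. The repair suggested by the constancy argument is to replace the limit by the supremum: one should expect ``$M^cf=Mf$ with $Mf$ finite'' to be equivalent to ``$f\in L^\infty$ and $\sup_{r>0}\langle f\rangle_{B(x,r)}=\|f\|_\infty$ for every $x$'', that is, to $M^cf\equiv\|f\|_\infty$, which is precisely what the first two paragraphs actually deliver.
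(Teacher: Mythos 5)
The paper offers no proof of this statement to compare yours against: it is explicitly ``left as a conjecture for the reader to verify,'' and the accompanying Remark disposes only of the easy implication, by exactly the argument of your first paragraph (if $\lim_{r\to\infty}\langle f\rangle_{B(x,r)}=\Vert f\Vert_\infty$ then $Mf\geq M^cf\geq\Vert f\Vert_\infty$ while every average is at most $\Vert f\Vert_\infty$, so $M^cf=Mf=\Vert f\Vert_\infty$). The substantive content of your proposal is therefore the counterexample to the forward implication, and I have checked that it is correct. With $R_{j+1}=R_j^2$, $H_j=[R_j,(1+\eta)R_j]$, $E=\mathbb{R}\setminus\bigcup_j H_j$ and $f=\mathbf{1}_E$: every average is at most $1$, so $Mf\leq 1$; for fixed $x$ the ball $B(x,R_{j+1}-x)$ contains removed mass at most $\sum_{i\leq j}\eta R_i\leq 2\eta R_j=2\eta\sqrt{R_{j+1}}$ (here the super-geometric growth $R_{j+1}=R_j^2$ is what makes the earlier holes negligible), a vanishing fraction of its length $\approx 2R_{j+1}$, so $M^cf(x)=1$ for every $x$, whence $M^cf\equiv Mf\equiv 1=\Vert f\Vert_\infty$ with $f\in L^\infty$; yet $B(0,(1+\eta)R_j)$ has just swallowed $H_j$, so $\langle f\rangle_{B(0,(1+\eta)R_j)}\leq 1-\frac{\eta}{2(1+\eta)}$, and the large-ball limit fails --- indeed at every center, since for bounded $f$ averages over $B(x,r)$ and $B(0,r)$ differ by $O(|x|/r)$, the ambiguity in the conjecture's quantifier over $x$ is immaterial. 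So the conjecture as phrased is false, and your proposal settles (negatively) precisely the direction the paper left open; your suggested repair, that ``$M^cf=Mf$ finite'' should be equivalent to $M^cf\equiv\Vert f\Vert_\infty$, is the natural reformulation and is consonant with Proposition \ref{pr:2.1}, which obstructs $M^cf=Mf$ when $Mf$ has only strict local minima.

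Two minor points. Your middle paragraph's sketch that the level-set argument of Theorem \ref{thm:1.2} transfers to force constancy of $M^cf$ under $M^cf=Mf$ is speculative --- that theorem concerns fixed points $Mf=f$, not coincidence of the two operators --- but your conclusion does not rest on it, since the counterexample already decides the stated equivalence. And your caveat $f(y)=\Vert y\Vert$, for which $M^cf\equiv Mf\equiv+\infty$ while $f\notin L^\infty$, is itself a degenerate counterexample to the forward direction; it shows the finiteness proviso in any repaired statement is genuinely necessary, and it would be worth stating that observation as part of the refutation rather than as an aside.
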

\begin{re}
The necessity is indubitable since $Mf(y)\geq \lim\limits_{r\to \infty}\langle f\rangle_{B(x,r)}$ for all $x,y$. Also note that $M^cf=Mf=\Vert f\Vert_\infty$ in the case.
\end{re}

To prove Proposition \ref{pr:2.1}, we need one simple fact.
\begin{lem}\label{thm:2.3}
	If there is a measure such that $M^cf(x)> Mf(y)$ for some $x,y \in \operatorname {supp} (\mu)$, then $Mf(x)$ is attained or $Mf(x)=\lim\limits_{r\to 0}\langle f\rangle_{B(x,r)}$. 
\end{lem}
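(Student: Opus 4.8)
The plan is to pin down where the supremum defining the maximal function at $x$ is realized, by feeding a maximizing sequence of balls through the hypothesis $M^cf(x)>Mf(y)$: this inequality will forbid the value from being produced by balls of large radius, leaving exactly the two admissible outcomes of the statement — a genuine maximizing ball, or a degeneration to the infinitesimal balls at $x$.

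First I would extract the two consequences of the hypothesis. Since every centered ball containing $x$ is admissible for the uncentered supremum, $Mf(x)\ge M^cf(x)>Mf(y)$. Moreover, for every $r\ge \rho(x,y)$ the closed ball $B(x,r)$ contains $y$, so $\langle f\rangle_{B(x,r)}\le Mf(y)<M^cf(x)=\sup_{r>0}\langle f\rangle_{B(x,r)}$; hence the centered supremum is already realized over the bounded radius range $(0,\rho(x,y)]$ and is bounded away from all of its large-radius values. Next I would invoke the regularity of $\mu$: condition \eqref{con:1.2} makes $r\mapsto\mu(B(x,r))$ continuous, which forces every sphere $\{\rho(\cdot,x)=r\}$ to be $\mu$-null and, by dominated convergence, makes $g(r):=\langle f\rangle_{B(x,r)}$ continuous on $(0,\infty)$. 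A continuous function on $(0,\infty)$ whose supremum is not approached as $r\to\infty$ must either attain it at some finite $r_0$ — producing the honest maximizing ball $B(x,r_0)$ — or approach it only as $r\to 0^+$, producing the value $\lim_{r\to 0}\langle f\rangle_{B(x,r)}$. This is exactly the claimed dichotomy at the centered level, and a finite-radius centered ball is admissible for $Mf(x)$.

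To upgrade this to the uncentered operator I would run the same scheme on a maximizing sequence $B_n=B(z_n,r_n)\ni x$ with $\langle f\rangle_{B_n}\to Mf(x)$, assuming for contradiction that $Mf(x)$ is not attained. The key observation is that these balls cannot eventually contain $y$: if $y\in B_n$ then $\langle f\rangle_{B_n}\le Mf(y)$, which is bounded away from $Mf(x)$. Setting $B_n^+:=B(z_n,\rho(z_n,y))$, each $B_n^+$ contains both $B_n$ and $y$, so $\langle f\rangle_{B_n^+}\le Mf(y)$, while the enclosing shell $B_n^+\setminus B_n$ has width $\rho(z_n,y)-r_n\in(0,\rho(x,y)]$, uniformly bounded. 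If the radii remain in a compact subinterval of $(0,\infty)$, the Heine–Borel property lets me pass to $z_n\to z$ and $r_n\to r>0$ with $x\in B(z,r)$, and continuity of the averages (again using the null spheres) identifies $Mf(x)=\langle f\rangle_{B(z,r)}$, contradicting non-attainment. If instead $r_n\to 0$, then $\rho(x,z_n)\le r_n\to0$ forces $z_n\to x$ and the balls collapse to $x$, which matches the second alternative $Mf(x)=\lim_{r\to0}\langle f\rangle_{B(x,r)}$.

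The hard part will be the remaining possibility $r_n\to\infty$, because an off-center ball of large radius need not swallow $y$, so the clean ``bounded radius range'' argument available for $M^c$ has no uncentered analogue. Here I would exploit the shell comparison: from $\langle f\rangle_{B_n^+}\le Mf(y)$ and $B_n\subseteq B_n^+$ one gets $\langle f\rangle_{B_n}\le Mf(y)\bigl(1+\mu(B_n^+\setminus B_n)/\mu(B_n)\bigr)$, so passing to the limit yields $Mf(x)\le Mf(y)\bigl(1+\limsup_n \mu(B_n^+\setminus B_n)/\mu(B_n)\bigr)$. Thus the escape to infinite radius is excluded precisely when the bounded-width shell becomes measure-negligible relative to the ball, i.e. $\mu(B_n^+)/\mu(B_n)\to 1$. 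Establishing this negligibility — together with the analogous off-center versus centered comparison used in the $r_n\to0$ branch — is where the continuity hypothesis on $\mu$ must genuinely be spent; the soft dichotomy above is routine, but this measure-ratio control is the real obstacle.
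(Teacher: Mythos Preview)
Your first paragraph already contains the paper's entire argument. The paper shows that the hypothesis $M^cf(x)>Mf(y)$ forces $M^cf(x)=\sup_{r\in(0,\rho(x,y)]}\langle f\rangle_{B(x,r)}$ (any ball $B(x,r)$ with $r\ge\rho(x,y)$ contains $y$, so its average is at most $Mf(y)<M^cf(x)$), and then simply extracts a convergent subsequence of a maximizing sequence of radii in the compact interval $[0,\rho(x,y)]$. That is all. Despite the ``$Mf(x)$'' in the printed statement, the paper's proof only establishes the dichotomy for the \emph{centered} quantity $M^cf(x)$; this suffices for the lemma's sole application in Proposition~\ref{pr:2.1}, where the standing hypothesis is $M^cf=Mf$.

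Everything from your second paragraph onward is an attempt to prove a strictly stronger statement than the paper does, and the obstacles you flag are real. The $r_n\to\infty$ branch requires the shell ratio $\mu(B_n^+\setminus B_n)/\mu(B_n)\to 0$, which condition~\eqref{con:1.2} alone does not guarantee (a measure whose mass grows rapidly at infinity defeats it). Your $r_n\to 0$ branch is also incomplete: from $z_n\to x$ and $r_n\to 0$ you get $B(z_n,r_n)\subseteq B(x,2r_n)$, but without some comparability of $\mu(B(z_n,r_n))$ and $\mu(B(x,2r_n))$ you cannot identify $\lim_n\langle f\rangle_{B(z_n,r_n)}$ with the centered limit $\lim_{r\to 0}\langle f\rangle_{B(x,r)}$. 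So drop the uncentered analysis entirely and read the conclusion of the lemma as a statement about $M^cf(x)$.
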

\begin{proof}
	To this end, we prove 
	$
	M^cf(x)=\sup\limits_{r\in[0,d(x,y)]}\langle f\rangle_{B(x,r)},
	$ then the lemma follows by passing to a subsequence.  We prove it by contradiction. Suppose, if possible, that the extreme radii diverge to infinity or is attained for some $r>d(x,y)$. If $M^cf(x)<+\infty$, so for any $\varepsilon$, we can select a $r_\varepsilon>d(x,y)$ such that $\langle f\rangle_{B(x,r_\varepsilon)}\geq M^cf(x)-\varepsilon$. Now  $B(x,r_\varepsilon)\ni y$ implies $Mf(y)\geq M^cf(x)-\varepsilon$ which obviously contradicts $M^cf(x)> Mf(y)$.
	
	Next, we have to consider the case of $M^cf(x)=+\infty$.  The same of
	reduction to absurdity shows a contradicting to $Mf(y)<+\infty$.
\end{proof}

\begin{proof}[Proof of Proposition \ref{pr:2.1}]
	Observe first that the measure assumed is non-atomic. For fixed $x\in \operatorname {supp}(\mu)$, consider the biggest radius $r_x$ so that all point $y\in B(x,r_x)$ has  $Mf(y)\geq Mf(x)$.  Thus $x$ is a local minimum point  if $r_x >0.$ Since the strict minimum point of a function is a countable set, combine the assumption then $\{x\in \operatorname {supp}(\mu):r_x=0\}$ is a set of full measure. Now applying Lemma \ref{thm:2.3}, we get $Mf(x)=\lim\limits_{r\to 0}\langle f\rangle_{B(x,r)}$ holds almost everywhere. The Lebesgue’s differentiation theorem demonstrates $Mf(x)=f(x)$ a.e.. This means we can reassign its function value on a set of measure zero to ensure $Mf(x)=f(x)$ everywhere. Now Theorem \ref{thm:1.2} implies that $f$ is a constant function and hence so does for $Mf$. This contradicts the assumption of $Mf$, so we completed the proof.
\end{proof}

\end{document}